\definecolor{airforceblue}{rgb}{0.36, 0.54, 0.66}
\definecolor{bleudefrance}{rgb}{0.19, 0.55, 0.91}
\definecolor{darkorchid}{rgb}{0.6, 0.2, 0.8}
\definecolor{darkorange}{rgb}{1.0, 0.55, 0.0}
\definecolor{darkspringgreen}{rgb}{0.09, 0.45, 0.27}
\newtheorem{thm}{Theorem}
\theoremstyle{definition}
\newcommand{\PP}{{\mathbb{P}}}
\newcommand{\map}{\dasharrow}
\def\p{\mathbb P}
\def\I{\mathcal I}
\newcommand{\Bl}{\operatorname{Bl}}
\newcommand{\Sec}{\operatorname{Sec}}
\def\GG{{\mathbb G}}
\def\PP{{\mathbb P}}
\def\Bl{\operatorname{Bl}}
\def\Rat{\operatorname{Rat}}
\let\phi=\varphi
\newtheorem*{thmn0}{Theorem}
\begin{document}

\title[Explicit rationality of cubic fourfolds]{Explicit rationality of some cubic fourfolds}

\author[F. Russo]{Francesco Russo*}
\address{Dipartimento di Matematica e Informatica, Universit\` a degli Studi di Catania, Viale A. Doria 5, 95125 Catania, Italy}
\email{frusso@dmi.unict.it, giovannistagliano@gmail.com}
\thanks{*Partially  supported  by the PRIN {\it Geometria delle variet\`{a} algebriche} and by the FIR2017  {\it Propriet\`{a} algebriche locali e globali di anelli associati a curve e ipersuperfici} of the University of Catania; the author is a member of the G.N.S.A.G.A. of INDAM}
\author[G. Staglian\` o]{Giovanni Staglian\` o}

\begin{abstract} Recent results of Hassett,  Kuznetsov and others pointed out countably many divisors $C_d$ 
in the open subset of   $\p^{55}=\p(H^0(\mathcal O_{\p^5}(3)))$ parametrizing all cubic 4-folds and lead to the conjecture
that the cubics corresponding to these divisors should be  precisely the rational 
ones. Rationality has been proved by Fano for the first divisor $C_{14}$ and  in \cite{RS1} for
the divisors  $C_{26}$ and  $C_{38}$.
In this note we describe explicit birational maps from a general cubic fourfold in $C_{14}$, in $C_{26}$ and in $C_{38}$
to $\p^4$, providing  concrete geometric realizations of the more abstract constructions in \cite{RS1}, see also \cite{RS3} for a theoretical framework. 
\end{abstract}

\maketitle

\section*{Introduction}
The rationality
of smooth cubic hypersurfaces in $\p^5$ is an open problem  on which a lot of new and interesting contributions and conjectures appeared in the last decades.  The classical work by Fano in \cite{Fano}, correcting some wrong assertions in \cite{Morin}, has been the only known result about the rationality of cubic fourfolds for a long time and, together with a great amount of recent theoretical work on the subject (see for example the survey \cite{Levico}), lead to the expectation that  the very general cubic fourfold should be irrational. More precisely, in the moduli space $\mathcal C$  the locus $\Rat(\mathcal C)$  of rational cubic fourfolds
 is the union of  a countable family of closed subsets $T_i\subseteq \mathcal C$, $i\in\mathbb N$, see \cite[Proposition 2.1]{deFernex2013} and  \cite[Theorem 1]{KontsevichTschinkelInventiones}. 

Hassett defined  in   \cite{Hassett, Has00} (see also \cite{Levico}) via Hodge Theory  infinitely many irreducible 
divisors $\mathcal C_d$  in  $\mathcal C$ and introduced the notion of   {\it admissible values} $d\in\mathbb N$, i.e. those even integers $d>6$  not divisible by 4, by 9 and nor by any odd prime of the form $2+3m$. More recent contributions by Kuznetzsov via derived categories in  \cite{kuz4fold,kuz2}  (see also \cite{AT, Levico}) fortified  the conjecture that
$$\Rat(\mathcal C)=\bigcup_{d \text{ admissible}}\mathcal C_d.$$
The first admissible values are $d=14, 26, 38, 42$ and Fano showed the
rationality of a general cubic fourfold in $\mathcal C_{14}$, see \cite{Fano, BRS}. The  main results of \cite{RS1} are summarised in the following:

\begin{thmn0}\label{intro-rat}
	{\it Every cubic fourfold  in the irreducible divisors $\mathcal C_{26}$ and $\mathcal C_{38}$  is rational.}
	\end{thmn0}

The geometrical definition of $\mathcal C_d$
can be also given as the (closure of the) locus of cubic fourfolds  $X\subset\p^5$ containing an explicit  surface $S_d\subset X$. These surfaces
are obviously not unique and a standard count of parameters shows that in specific examples the previous locus is a divisor (the degree and self-intesection of $S_d$ determine the value $d$ via the formula $d=3\cdot S^2-\deg(S)^2$),
 see \cite{Hassett, Has00} for more details on the Hodge theoretical definition of $\mathcal C_d$ and also \cite{Cd} for recent interesting contributions on  the divisors $\mathcal C_d$.  For example,  the divisor  $\mathcal C_{14}$ can be described either as the closure of the locus
of cubic fourfolds containing a smooth quintic del Pezzo surface or, equivalently, a smooth quartic rational normal scroll, see \cite{Fano, BRS} and also
\cite{Nuer} for other  descriptions with
$12\leq d\leq 44$, $d\neq 42$ or  \cite{Lai} for $d=42$.

Fano proved that the restriction of the linear system of quadrics through a smooth quintic del Pezzo surface, respectively a smooth quartic rational normal scroll, to a general cubic through the surface defines a birational map to $\p^4$, respectively onto a smooth four dimensional quadric hypersurface. Indeed, a general fiber of the map $\p^5\map \p^4$ given by quadrics through a quintic del Pezzo surface is a secant line to it, yielding the birationality of the restriction to $X$ (the other case is similar).
The extension of  this explicit geometrical approach to rationality for other (admissible) values $d$ appeared to be impossible  because there are no other irreducible {\it surfaces with one apparent double point} contained in a cubic fourfold, see \cite{CilRus, BRS}.

In \cite{RS1} we discovered irreducible
surfaces  $S_d\subset\p^5$ admitting a four-dimensional  family of 5-secant conics such that
through a general point of $\p^5$ there passes a unique conic of the family ({\it congruences of
5-secant conics to $S_d$}) for $d$=14, 26 and 38.  From this we deduced the rationality of  a general  cubic in $|H^0(\I_{S_d}(3))|$, showing that  it is    a rational section of the universal family of the congruence of 5-secant conics.  

Here we come back to Fano's method and we propose an explicit realisation  of the previous abstract approach.
Some simplifying hypotheses, based on the known examples in \cite{RS1}, suggest that  rationality might be related to linear systems of  hypersurfaces of degree $3e-1$ having points of multiplicity $e\geq 1$ along a {\it right} surface $S_d$ contained in the general $X\in\mathcal C_d$, see Section \ref{nec}. Clearly the problem is to find the  right $S_d$, prove that the above map is birational and, if the dimension of the linear system is bigger than four, describe the image (which might be highly non trivial). This expectation is motivated by the remark, due to J\' anos Koll\' ar, that if the surface $S_d\subset\p^5$ admits a congruence of $(3e-1)$-secant curves of degree $e\geq 1$ {\it generically transversal} to the cubics through $S_d$ (see Section \ref{nec} for precise definitions), then the above linear systems contract the curves of the congruence. So if the general fiber of the map is a curve of the congruence, then a general cubic through $S_d$ is birational to the image of the associated map. We shall see that, quite surprisingly, this  really occurs  for the first three admissible values $d=14, 26, 38$ and, even more surprisingly, that these linear systems provide by restriction explicit birational maps from a general cubic fourfold in $\mathcal C_d$ for $d=14, 26, 38$ 
to $\p^4$ (and also  to a four dimensional linear section of a $\mathbb G(1,3)$, respectively $\mathbb G(1,4)$, respectively $\mathbb G(1,5)$), a fact which was not known before at least for $d=26, 38$ (see also \cite[Section 5, \S 29]{Kollar}). The theoretical framework explaining the birationality of these maps,  the relations with the theory of congruences to the surfaces $S_d$ and to the associated K3 surfaces has been developed recently in \cite{RS3}.

To analyze   the algebraic and geometric properties of the surfaces $S_d\subset\p^5$ involved as well the particular  linear systems of hypersurfaces of degree $3e-1$  having points of multiplicity $e$ along  the $S_d$'s we used  
Macaulay2 \cite{macaulay2} together with some standard semicontinuity arguments to pass from a particular verification to the general case. 

\medskip

{\bf Acknowledgements}. We  wish to thank J\' anos Koll\' ar for asking about the maps defined by the linear systems of quintics singular along surfaces admitting a congruence of 5-secant conics and for his interest in our subsequent results. 

\begin{table}[htbp]
\centering
\tabcolsep=1.2pt 
\begin{tabular}{cccccccc}
\hline
\rowcolor{gray!5.0}
 $d$  & {Surface $S\subset\PP^5$} & \scriptsize{\begin{tabular}{c} $2$-secant \\ lines \end{tabular}} & \scriptsize{\begin{tabular}{c} $5$-secant \\ conics \end{tabular}} & \scriptsize{\begin{tabular}{c} $8$-secant \\ twisted \\ cubics \end{tabular}}  & $h^0(\mathcal{I}_{S/\PP^5}(3))$ & $h^0(N_{S/\PP^5})$ & $h^0(N_{S/X})$ \\
\hline \hline 
\rowcolor{blue!5.0}
${14}$ & \scriptsize{\begin{tabular}{c} Smooth del Pezzo surface of degree $5$ \end{tabular}} & $1$  & $0$ &  $0$ & $25$ & $35$ & $5$  \\
\hline
\rowcolor{red!5.0}
${14}$ & \scriptsize{\begin{tabular}{c} Rational normal scroll of degree $4$ \end{tabular}} & $1$  & $0$ &  $0$ & $28$ & $29$ & $2$   \\
\hline
\rowcolor{blue!5.0}
${14}$ & \scriptsize{\begin{tabular}{c} Isomorphic projection of a smooth surface \\in $\p^6$ of degree $8$ and sectional genus $3$,\\ obtained as the image of $\p^2$ via the linear \\system of quartic curves with $8$ general \\ base points \end{tabular}} & $7$  & $1$ &  $0$ & $13$ & $49$ & $7$ \\
\hline
\rowcolor{blue!5.0}
${26}$ & \scriptsize{\begin{tabular}{c} Rational scroll of degree $7$ with $3$ nodes  \end{tabular}} & $7$  & $1$ &  $0$ & $13$ & $44$ & $2$ \\
\hline
\rowcolor{blue!5.0}
${38}$ & \scriptsize{\begin{tabular}{c} Smooth surface of degree $10$ and sectional \\ genus $6$, obtained as the image of $\PP^2$ via \\ the linear system  of curves of degree $10$\\  with $10$ general triple points \end{tabular}} & $7$  & $1$ &  $0$ & $10$ & $47$ & $2$ \\
\hline
\rowcolor{red!5.0}
${26}$ & \scriptsize{\begin{tabular}{c} Projection of a smooth del Pezzo surface \\ of degree $7$ in $\PP^7$  from a line intersecting \\ the secant variety in one general point \end{tabular}} & $5$  & $1$ &  $0$  & $14$ & $42$ & $1$ \\
\hline
\rowcolor{red!5.0}
${38}$ & \scriptsize{\begin{tabular}{c} Rational scroll of degree $8$ with $6$ nodes  \end{tabular}} & $9$  & $4$ &  $1$ & $10$ & $47$ & $2$   \\
\hline
\end{tabular}
 \caption{Surfaces $S\subset\PP^5$ contained in a cubic fourfold $[X]\in\mathcal C_d$ and admitting a congruence of $(3e-1)$-secant rational normal curves of degree $e\leq3$.} 
\label{TabCongruenze} 
\end{table}

\begin{table}[htbp]
\centering
\tabcolsep=5.8pt 
\begin{tabular}{ccccccccc}
\hline
\rowcolor{gray!5.0}
 $d$  & $e$ & Multidegree & $Y^4$ & $\delta$  & $\deg(\mathfrak{B})$ & $g(\mathfrak{B})$ & $\deg(\mathfrak{B}_{\mathrm{red}})$ & $g(\mathfrak{B}_{\mathrm{red}})$\\
\hline \hline 
\rowcolor{blue!5.0}
${14}$ & $1$ & $3, 6, 7, 4, 1$  & $\PP^4$ &  $4$ & $9$ & $8$ & $9$ & $8$  \\
\hline
\rowcolor{red!5.0}
${14}$ & $1$ & $3, 6, 8, 6, 2$   & $\GG(1,3)\subset\PP^5$ &  $3$ & $10$ & $7$ & $10$ & $7$   \\
\hline  
\rowcolor{blue!5.0}
${14}$ & $2$ & $3, 15, 19, 9, 1$  & $\PP^4$ &  $9$ & $52$ & $256$ & $32$ & $106$ \\
\hline
\rowcolor{blue!5.0}
${26}$ & $2$ & $3, 15, 20, 9, 1$  & $\PP^4$ &  $9$ & $51$ & $246$ & $31$ & $100$ \\
\hline
\rowcolor{blue!5.0}
${38}$ & $2$ & $3, 15, 27, 9, 1$  & $\PP^4$ &  $9$ & $42$ & $165$ & $18$ & $39$ \\
\hline
\rowcolor{red!5.0}
${26}$ & $2$ & $3, 15, 31, 25, 5$  & $\GG(1,4)\cap\PP^7\subset\PP^7$ &  $5$  & $77$ & $212$ & $43$ & $73$ \\
\hline
\rowcolor{red!5.0}
${38}$ & $3$ & $3, 24, 80, 70, 14$  & $\GG(1,5)\cap\PP^{10}\subset\PP^{10}$ &  $5$ & $204$ & $633$ & $94$ & $144$  \\
\hline
\end{tabular}
 \caption{Birational maps from a  cubic fourfold $[X]\in\mathcal C_d$ to a fourfold $Y^4$
 defined by the restrictions to $X$ of the linear systems 
 $|H^0(\mathcal I_S^e(3e-1))|$, where $S\subset\PP^5$ 
 are the surfaces in Table~\ref{TabCongruenze} admitting a congruence of $(3e-1)$-secant rational normal curves of degree $e\leq3$.
 Here, $\mathfrak{B}$ denotes the base locus of the inverse map, which is a $2$-dimensional scheme; $g$ 
 stands for the sectional arithmetic genus; $\delta$ is the degree of the forms defining the inverse map.} 
\label{TabCongruenze2} 
\end{table}

\section{Explicit  rationality  via linear systems of hypersurfaces of degree $3e-1$ having points of multiplicity $e$ along a {\it right} surface}\label{nec}

Let us recall the following definitions introduced in \cite[Section 1]{RS1}. Let $\mathcal H$ be 
an irreducible proper family of  (rational or of fixed arithmetic genus) curves of degree $e$ in $\p^5$ whose general element is irreducible.
We have a diagram
$$\xymatrix{
 \mathcal D\ar[dr]^{\psi}\ar[d]^{\pi}&\\
\mathcal H&\p^5}
$$
where $\pi:\mathcal D\to \mathcal H$
is the  universal family over $\mathcal H$ and where  $\psi:\mathcal D\to\p^5$ is  the tautological morphism. 
Suppose moreover that $\psi$ is birational and that a general member $[C]\in \mathcal H$ is ($re-1$)-secant to an irreducible surface $S\subset\p^5$,
that is $C\cap S$ is a length $r e-1$ scheme, $r\in\mathbb N$. We shall call such a family $\mathcal H$ a {\it congruence of }  ($re-1$)-{\it secant curves of degree $e$ to $S$}.  Let us remark that necessarily $\dim(\mathcal H)=4$.
\medskip

An irreducible  hypersurface $X\in|H^0(\mathcal I_{S}(r))|$ is said to be {\it transversal to the congruence $\mathcal H$} if  the unique curve of the congruence passing through a general point  $p\in X$ is not contained in $X$. A crucial result is the following.

\begin{thm}\label{criterion} {\rm \cite[Theorem 1]{RS1}} Let $S\subset\p^5$ be a surface admitting a congruence of {\rm(}$re-1${\rm)}-secant curves of degree $e$ parametrized by $\mathcal H$.
If  $X\in|H^0(\mathcal I_{S}(r))|$ is an irreducible hypersurface transversal to $\mathcal H$,  then $X$ is birational to $\mathcal H$.

If the map   
$\Phi=\Phi_{|H^0(\mathcal I_{S}(r))|}:\p^5\map \p(H^0(\mathcal I_{S}(r)))$
is birational onto its image, then a general  hypersurface  $X\in|H^0(\mathcal I_{S}(r))|$  is birational to $\mathcal H$.

Moreover, under the previous hypothesis on $\Phi$, if  a general  element in $|H^0(\mathcal I_{S}(r))|$ is smooth, then every $X\in |H^0(\mathcal I_{S}(r))|$ with at worst rational singularities is birational to $\mathcal H$.
\end{thm}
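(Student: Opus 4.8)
The plan is to prove the first (and geometrically decisive) assertion directly, then to reduce the second to it via a transversality statement for a general member, and finally to deduce the third by a degeneration argument.

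\emph{First assertion.} Since $\psi$ is birational there is a dominant rational map $\rho:=\pi\circ\psi^{-1}\colon\p^5\map\mathcal H$ sending a general point $p$ to the unique curve $[C_p]\in\mathcal H$ of the congruence through $p$, with general fibre a dense open subset of a curve of $\mathcal H$. Given an irreducible $X\in|H^0(\mathcal I_S(r))|$ transversal to $\mathcal H$, I would show that $\rho|_X\colon X\map\mathcal H$ is birational. It is dominant: otherwise $X$ would be swept out by a subfamily of $\mathcal H$ of dimension $\le 3$ whose general curve — there being a \emph{unique} curve of the congruence through a general point of $\p^5$ — must be contained in $X$, contradicting transversality. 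Being dominant between irreducible fourfolds it is generically finite; and for a general $[C]\in\mathcal H$ one has $C\not\subset X$, so by B\'ezout $\length(C\cap X)=re$, while $\length(C\cap S)=re-1$ and $S\subset X$, whence $C\cap X$ is $C\cap S$ plus a single residual point $p_C\notin S$. Thus the general fibre of $\rho|_X$ is the one point $p_C$, so $\rho|_X$ has degree $1$, with inverse $[C]\mapsto p_C$: indeed for a general $p\in X$, transversality together with $p\notin S$ forces $p$ itself to be the residual point of $C_p\cap X$. (One uses here that $X$ is not contained in the locus where $\psi$ fails to be an isomorphism, which is part of the meaning of transversality.)

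\emph{Second assertion.} Suppose $\Phi$ is birational onto $W:=\overline{\Phi(\p^5)}$, so $\dim W=5$. First, $\Phi$ cannot contract the general curve of $\mathcal H$: otherwise $\Phi\circ\psi\colon\mathcal D\map W$ would be constant along the general fibre of $\pi$ and so factor rationally through $\mathcal H$, forcing $\dim W\le\dim\mathcal H=4$. Next, restricting the linear system $|H^0(\mathcal I_S(r))|$ to a general $[C]\in\mathcal H$, its base locus along $C$ contains $C\cap S$, of length $re-1$, while $\mathcal O_{\p^5}(r)|_C$ has degree $re$; hence the mobile part has degree $\le 1$, and since $\Phi$ does not contract $C$ it is a degree-$1$ linear system of positive dimension, which forces $C\cong\p^1$ and shows that $\Phi$ maps $C$ isomorphically onto a line $\ell_C\subset W$. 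Finally, a general member of $|H^0(\mathcal I_S(r))|$ is the closure $X_H$ of $\Phi^{-1}(H)$ for a general hyperplane $H$, and $C\subset X_H$ holds if and only if $\ell_C\subset H$; since the lines $\ell_C$ $([C]\in\mathcal H)$ cover $W$, while a general hyperplane contains only a codimension-one subfamily of the lines through any one of its points, a dimension count shows that a general point of $W\cap H$ lies on a line $\ell_C\not\subset H$. Equivalently $X_H$ is transversal to $\mathcal H$, and the first assertion gives that $X_H$, hence a general member, is birational to $\mathcal H$.

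\emph{Third assertion.} Given an arbitrary $X_0\in|H^0(\mathcal I_S(r))|$ with at worst rational singularities, I would realize it as the special fibre of a flat projective family over a smooth affine curve — e.g.\ (the normalization of) a general pencil through $[X_0]$ in the linear system — whose general fibre is a smooth member and so, by the second assertion, birational to $\mathcal H$. Invoking the specialization principle for the birational type in a family with smooth general fibre and special fibre having at worst rational singularities — in the circle of ideas recalled in the Introduction, cf.~\cite{deFernex2013, KontsevichTschinkelInventiones} — I would then conclude that $X_0$ too is birational to $\mathcal H$.

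I expect the \emph{third assertion} to be the main obstacle. The first two are essentially B\'ezout together with elementary bookkeeping on the restricted linear system, the only structural inputs being that the curves of $\mathcal H$ cover $\p^5$ (so that $\psi$ dominant yields the relevant dimension counts) and that transversality is an open nonempty condition on $|H^0(\mathcal I_S(r))|$. Passing from the general member to an arbitrary one with rational singularities is, by contrast, not formal: it genuinely requires a specialization theorem for birational equivalence that allows mildly singular special fibres, together with a check that a suitable one-parameter degeneration of $X_0$ inside the linear system is mild enough (rational singularities of the total space) for such a theorem to apply.
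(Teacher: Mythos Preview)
The paper does not contain a proof of this theorem: it is quoted verbatim from \cite[Theorem~1]{RS1} and no argument is reproduced here beyond the remark, a few lines later, that the birational map is $\pi\circ\psi^{-1}|_X$. There is therefore nothing in the present paper to compare your proposal against.

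That said, your argument for the first assertion is exactly the intended one (and is what the paper alludes to when it writes ``see the proof of the above result''): the residual-point map $[C]\mapsto p_C$ inverts $\rho|_X$ by B\'ezout. Your treatment of the second assertion is also the standard one---showing that $\Phi$ sends the curves of $\mathcal H$ to lines on $W$ and then arguing that a general hyperplane section is transversal---though you could tighten the final dimension count by simply noting that the family of lines $\{\ell_C\}$ is not contained in any hyperplane (since it covers $W$), so a general $H$ meets the general $\ell_C$ in a single point. For the third assertion your instinct is right that this is where the real input lies, and invoking a specialization-of-birational-type result in the spirit of \cite{KontsevichTschinkelInventiones} is the correct move; just be aware that one needs the version for birational types (not merely rationality), and that the hypothesis of rational singularities on the special fibre is precisely what makes such specialization theorems applicable.
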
 

Thus, when a surface $S\subset\p^5$ admits a conguence of $(3e-1)$-secant curves of degree $e\geq 1$ which is transversal to a general
 cubic fourfold through it, such a  cubic fourfold is birational to $\mathcal H$ via $\pi\circ\psi^{-1}_{|X}$, see the proof of the above result.
Obviously,  the difficult key point is to find a congruence as above with $\mathcal H$ rational (or irrational, depending on the application).

Since $\psi:\mathcal D\to\p^5$ is birational, we also have a rational map $$\varphi=\pi\circ\psi^{-1}:\p^5\map\mathcal H,$$ whose general fiber through $p\in\p^5$, $F=\overline{\varphi^{-1}(\varphi(p))}$, is the unique curve of the congruence passing through $p$. On the contrary, if there exists a map $\varphi:\p^5\map Y\subseteq\p^N$ with $Y$ a four dimensional variety, whose general fiber $F$ is an irreducible curve of degree $e$ which is $(3e-1)$-secant to a surface $S\subset\p^5$, then we have found a congruence for $S$, a birational realization of the variety $\mathcal H$ and a concrete representation of the abstract map $\pi\circ\psi^{-1}$.

It is natural to ask what linear systems on $\p^5$ can give  maps $\varphi:\p^5\map Y$ as above. Since  
a linear system in $|H^0(\mathcal I_S^e(3e-1))|$ contracts the fibers of $\varphi$, these (complete) linear systems appear as natural potential candidates, as remarked by J\' anos Koll\' ar. 
A posteriori we shall see that, quite surprisingly, this  really occurs with $Y=\p^4$ (or with $Y$ a linear section of a Grassmannian of lines) for the first three admissible values $d=14, 26, 38$ and, even more surprisingly, that these linear systems provide the explicit rationality of a general cubic fourfold in $\mathcal C_d$ for $d=14, 26, 38$. 
\medskip

Let $E\subset \Bl_S\p^5$ be the exceptional divisor of the blow-up of $\p^5$ along $S$, let $H\subset 
\Bl_S\p^5$ be the pull back
of a hyperplane in $\p^5$, let $F'$ be the strict transform of $F$  and let $\tilde\varphi:\Bl_S\p^5\map \mathcal H$ be the rational map induced by $\varphi$.
Then  $E\cdot F'=3e-1$ by hypothesis and $(3H-E)\cdot F'=1$.
The last condition  translates both  that a general cubic through  $S$ is mapped birationally onto $\mathcal H$ by $\varphi$, both the fact that the linear system
$|H^0(\mathcal I_S(3))|$ sends a general $F$ into a line contained in  the image of the corresponding map $\psi:\p^5\map \p(H^0(\mathcal I_S(3)))$ and, last but not least, also that the congruence is transversal to a general cubic through $S$ (see \cite{RS1}
for a systematic use of these key remarks).
\medskip

For $e=1$ one should consider linear systems of quadric hypersurfaces  through $S$; for $e=2$  quintics having double points along $S$; for $e=3$ hypersurfaces of degree 8 having triple points along $S$ and so on. 

For $e=1$ we have a unique secant line to $S$ passing through a general point of $\p^5$, which is a very strong restriction. Indeed, such a $S$ is a so called {\it surface with one apparent double point}. These surfaces  are completely classified in \cite{CilRus} and those contained in a  cubic fourfold are only quintic del Pezzo's and smooth quartic rational normal scrolls. Cubic fourfolds through these surfaces describe the divisor $\mathcal C_{14}$ as it was firstly remarked by Fano in \cite{Fano} (see also \cite[Theorem 3.7]{BRS} for a modern account of Fano's  original arguments using  deformations of quartic scrolls). 

Let $D\subset \p^5$ be an arbitrary  smooth quintic del Pezzo surface. Then $|H^0(\mathcal I_D(2))|=\p^4$ and this linear system determines a dominant rational map $\varphi:\p^5\map \p^4$, whose general fiber $F$ is a secant line to $D$. Then the restriction of $\varphi$ to a  cubic fourfold through $D$ yields a birational map
$\psi:X\map \p^4$ and  hence the rationality of a general $X\in\mathcal C_{14}$, as firstly remarked by Fano in \cite{Fano}.

Let $T\subset\p^4$ be a smooth quartic rational normal scroll. Then $|H^0(\mathcal I_T(2))|=\p^5$ and this linear system determines a dominant rational map $\varphi:\p^5\map Q\subset \p^5$, whose general fiber $F$ is a secant line to $T$ and with $Q$ a smooth quadric hypersurface. Then the restriction of $\varphi$ to a  cubic fourfold through $T$ yields a birational map
$\psi:X\map Q$ and another proof of the rationality of a general $X\in\mathcal C_{14}$, see \cite{Fano}. 
\medskip

We shall mainly consider the surfaces $S_d\subset\p^5$ admitting a congruence of 5-secant conics parametrised by a rational variety studied  in \cite{RS1} (but also other new examples) to  determine  explicitly the rationality of a general  $X\in\mathcal C_d$ for $d=14, 26, 38$ with $e=2$ (or also for other values $e\geq 3$). To this aim we shall summarise some well known facts in the next subsection.

\subsection{Linear systems of quintics with double points along a general $S_d\in \mathcal S_d$}\label{construction} Let $\mathcal S_d$ be an irreducible component of the Hilbert  scheme of surfaces in $\p^5$ with a fixed Hilbert polynomial $p(t)$ and such that 
$$\mathcal C_d=\overline{\{[X]\in\mathcal C\text{ for which }\exists\,  [S_d]\in\mathcal S_d\,:\, S_d\subset X\}}.$$

One can verify explicitly the previous equality by comparing the Hodge theoretic definition on the left with the geometrical description on the right.
A modern count of parameters  usually shows that the right side is at least a divisor in $\mathcal C$ so that equality holds because $\mathcal C_d$ is an irreducible divisor if not empty. 
The hard problem is to compute the dimension
of the family of $S_d$'s contained in a fixed (general)  $X$ belonging to the set on the right side above, see \cite{Nuer, RS1} for some efficient computational arguments based on semicontinuity.

For every $a,b\in \mathbb N$ the functions $h^0(\mathcal I_{S_d}^b(a))$ are upper semicontinuous on $\mathcal S_d$. In particular there exists an open non empty subsets $U\subseteq \mathcal S_d$ on which $h^0(\mathcal I_{S_d}^b(a))$ attains a minimum value $m=m(a,b)$.

We shall be mainly interested in the case $a=5$ and $b=2$, that is the computation of  the dimension of the linear system $|H^0(\mathcal I_{S_d}^2(5))|$ for $S_d\in\mathcal S_d$ general. To this aim we  consider the exact sequence
\begin{equation}\label{conorm}
0\to \mathcal I^2_{S_{d}}(5)\to \mathcal I_{S_{d}}(5)\to N^*_{S_{d}/\p^5}(5)\to 0.
\end{equation}

Suppose that we know $h^0(N^*_{S_{d}/\p^5}(5))=y$ 
and $h^0(\mathcal I_{S_d}(5))=x$ for the general $S_d\in\mathcal S_d$ via standard exact sequences (or also computationally) or for some geometrical property of the surfaces. From \eqref{conorm} we deduce $h^0(\mathcal I^2_{S_d}(5))\geq x-y$ for a general $S_d$. By the upper semicontinuity of $h^0(\mathcal I^2_{S_d}(5))$ it will be sufficient to find a surface $S\in \mathcal S_d$ with $h^0(\mathcal I^2_S(5))=x-y$ to deduce that the same holds
for a general $S_d\in \mathcal S_d$.

If $\pi:\chi_d\to \mathcal S_d$ is the universal family and if a general $[S_d]\in\mathcal S_d$ is  smooth,
let $V\subseteq\mathcal S_d$ be the non empty open set of points $[S]\in\mathcal S_d$ such that $S\subset\p^5$ is a smooth surface. Then $\pi^{-1}(V)\to V$ is a smooth morphism and   the function   $h^0(N^*_{S/\p^5}(a))$ is upper semicontinuos on $V$ for every $a\in\mathbb N$. In particular, if there exists $[S]\in V$ such that   $h^0(N^*_{S/\p^5}(a))=z$, then, for a general $[S_d]\in \mathcal \mathcal S_d$, we have  $h^0(N^*_{S_d/\p^5}(a))\leq z$ and hence $h^0(\mathcal I_{S_d}^2(a))\geq m(a,2)-z$ .  If moreover $h^0(\mathcal I_{S'}^2(a))=m(a,2)-z$ for a $[S']\in\mathcal S_d$, then the same holds for a general $[S_d]\in\mathcal S_d$. These standard and well known remarks will be useful in our analysis of the examples in the next sections, where we shall also deal with the case $e=3$ and the linear systems $|H^0(\mathcal I_{S_d}^3(8))|$ for $d=14,38$.

\subsection{Computations via Macaulay2} 
To study surfaces in $\p^5$ admitting congruences of $(3e-1)$-secant curves
of degree $e$, the rational maps given by hypersurfaces of degree $3e-1$ having points of multiplicity $e$ along these surfaces
and also the lines contained in the images
of $\p^5$ via the linear system of cubics through these surfaces we mostly  used  
Macaulay2 \cite{macaulay2}. 

Our proofs of various claims 
exploit the fact that 
the irreducible components $\mathcal S_d$ 
of the Hilbert schemes considered here 
are unirational.
Therefore, by introducing a finite number of free parameters, 
one can explicitly construct  the generic surface in $\mathcal S_d$ in function of the specified parameters. 
Adding more parameters one can also take the generic point of $\p^5$,
and then one 
can for instance
compute the generic fiber 
of the map
defined by the cubics through 
the generic $[S_d]\in\mathcal S_d$, which will depend on all these parameters. 
In principle, there are no theoretical limitations to perform this computation,
but in practice this is far beyond what computers can do today. Anyway,
the answer we get is equivalent to the one obtained 
on the original field via a generic specialization 
of the parameters and, above all, 
the generic specialization commutes with this type of computation. 
So, using a common computer one can  get an experimental proof  
that a certain property holds or not for the generic $[S_d]$.
In the affirmative case, one then applies some
semicontinuity arguments to get a rigorous proof.

\section{Explicit birational maps to $\p^4$  via linear system of quintics with double points
for cubics in $\mathcal C_{d}$,  $d=14,26, 38$.}\label{explicit}

Let  $S_{14}\subset\p^5$ be an isomorphic projection of an  octic  smooth
surface $S\subset\p^6$ of sectional genus 3 in $\p^6$, obtained as the image of $\p^2$ via the linear system of quartic
curves with 8  general base points. Let $\mathcal S_{14}$ be the irreducible component  of the Hilbert scheme parametrizing  surfaces $S_{14}\subset\p^5$ as above. 
In \cite[Theorem 2]{RS1} we verified that a general $X\in\mathcal C_{14}$ contains a surface $S_{14}$
and that each $S_{14}$ admits a congruence of 5-secant conics (birationally) parametrized by its symmetric product and transversal to $X$.

\begin{thm}\label{famconics14} For a  general surface $S_{14}\subset\p^5$ as above we have $|H^0(\mathcal I_{S_{14}}^2(5))|=\p^4$ and this linear system determines a rational map $\varphi:\p^5\map \p^4$ whose general fiber is a  5-secant conic to $S_{14}$. In particular,  the restriction of $\varphi$ to a general cubic through $S_{14}$ is birational.
\end{thm}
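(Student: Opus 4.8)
The plan is to establish the three assertions in sequence, with the computation of $h^0(\mathcal I_{S_{14}}^2(5))$ as the foundation. First I would compute the dimension of the linear system $|H^0(\mathcal I_{S_{14}}^2(5))|$ using the strategy outlined in Subsection~\ref{construction}. From the exact sequence \eqref{conorm} one gets $h^0(\mathcal I^2_{S_{14}}(5)) \geq h^0(\mathcal I_{S_{14}}(5)) - h^0(N^*_{S_{14}/\p^5}(5))$. Since $S_{14}$ is the isomorphic projection to $\p^5$ of a smooth octic surface $S\subset\p^6$ of sectional genus $3$ — itself the image of $\p^2$ under a linear system of plane quartics with $8$ general base points — one can compute $h^0(\mathcal I_{S_{14}}(5))$ and $h^0(N^*_{S_{14}/\p^5}(5))$ explicitly on $\p^2$ (the relevant cohomology on $\p^2$ is routine) or, as announced, directly in Macaulay2 for an explicit member of $\mathcal S_{14}$ constructed via free parameters. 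The goal is to verify these values give $h^0(\mathcal I^2_{S_{14}}(5)) = 5$ for one explicit smooth $S$; then upper semicontinuity of $h^0(\mathcal I^2_{S_d}(5))$ on $\mathcal S_{14}$, together with the lower bound from \eqref{conorm} valid for the general member, forces $h^0(\mathcal I^2_{S_{14}}(5)) = 5$ on an open dense subset, i.e.\ $|H^0(\mathcal I_{S_{14}}^2(5))| = \p^4$ generically.

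Next I would analyze the rational map $\varphi = \varphi_{|H^0(\mathcal I^2_{S_{14}}(5))|}\colon \p^5 \map \p^4$. By \cite[Theorem 2]{RS1}, cited just above, $S_{14}$ carries a congruence $\mathcal H$ of $5$-secant conics, birationally parametrized by the symmetric product $\mathrm{Sym}^2(S_{14})$, with $\psi\colon\mathcal D\to\p^5$ birational. A conic $F$ in this congruence meets $S_{14}$ in a length-$5$ scheme, so every quintic singular along $S_{14}$ meets $F$ in a scheme of length $\geq 2\cdot 5 = 10 > 2\cdot 2$; since $\deg(F\cap \{\text{quintic}\}) = 10$ when $F\not\subset\{\text{quintic}\}$, the quintic must contain $F$. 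Hence the whole linear system $|H^0(\mathcal I^2_{S_{14}}(5))|$ contracts the general conic of $\mathcal H$, so the general fiber of $\varphi$ contains such a conic. To see the general fiber \emph{equals} the conic (so that $\varphi$ genuinely factors through $\mathcal H \to \p^4$), compare dimensions: $\varphi$ is dominant onto $\p^4$ (this is part of the claim — see below), $\dim\mathcal H = 4$ by the remark following the definition of congruence, and through a general point of $\p^5$ there passes a \emph{unique} conic of $\mathcal H$; counting dimensions, the general fiber is one-dimensional and irreducible, hence is exactly that conic. Dominance of $\varphi$ is the point where I expect to lean on an explicit Macaulay2 verification: compute the image of $\varphi$ on the generic (parametrized) surface and check it is all of $\p^4$; equivalently check that a general fiber is finite over a point, i.e.\ that the generic fiber computed symbolically is the expected conic and not higher-dimensional.

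For the final assertion, let $X$ be a general cubic fourfold containing $S_{14}$. Transversality of $\mathcal H$ to $X$ — that the unique conic of $\mathcal H$ through a general point of $X$ is not contained in $X$ — is exactly what is asserted in \cite[Theorem 2]{RS1} and recalled above, so it may be assumed. Then by Theorem~\ref{criterion} (the first clause), $X$ is birational to $\mathcal H$ via $\pi\circ\psi^{-1}|_X$; equivalently, since $\varphi$ restricted to $X$ is a concrete birational realization of $\pi\circ\psi^{-1}$ with target $\p^4$, one reads off that $\varphi|_X\colon X\map \p^4$ is birational. Concretely: the general fiber of $\varphi$ is a conic $F$ with $F\cap X$ of length $3e-1 = 5 > 2$ but $F\not\subset X$ (transversality), so $F$ meets $X$ in a scheme of length $5$ while $\deg F\cdot\deg X = 6$; thus $F\cap X$ is a single residual reduced point, showing $\varphi|_X$ is generically one-to-one onto its image $\p^4$.

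The main obstacle is the explicit dimension count $h^0(\mathcal I^2_{S_{14}}(5)) = 5$ together with the dominance of $\varphi$ onto $\p^4$: both rest on an honest computation (whether done by hand on $\p^2$ via the quartic linear system and the conormal twist, or by Macaulay2 on a parametrized surface with a subsequent semicontinuity argument). Everything downstream — contraction of the congruence, identification of the general fiber, and birationality of the restriction to $X$ — is then formal, given \cite[Theorem 2]{RS1} and Theorem~\ref{criterion}.
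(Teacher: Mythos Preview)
Your overall strategy is the paper's: use the conormal sequence \eqref{conorm} plus semicontinuity for the dimension count, then check the fiber of $\varphi$ and read off birationality of $\varphi|_X$. The paper supplies the explicit numbers you leave open --- $h^0(\mathcal I_{S_{14}}(5))=141$ via $k$-normality \cite[Example~3.8]{AR} and $h^0(N^*_{S/\p^5}(5))=136$ in a specific smooth example --- and then simply reports a Macaulay2 verification that the general fiber is a $5$-secant conic, without the theoretical detour through the congruence.

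There is, however, a genuine slip in your contraction argument. You assert that a quintic singular along $S_{14}$ must \emph{contain} the conic $F$ because the intersection scheme has length $\geq 2\cdot 5=10$; but B\'ezout gives exactly $2\cdot 5=10$ when $F\not\subset\{\text{quintic}\}$, so equality is perfectly consistent and containment does not follow. The correct statement is that every such quintic cuts $F$ in the \emph{same} divisor $2(F\cap S_{14})$ --- equivalently, on $\Bl_{S_{14}}\p^5$ one has $(5H-2E)\cdot F'=2\cdot 5-2\cdot 5=0$ --- so $\varphi$ is constant on $F$ even though an individual member of the linear system need not contain it. With this fix your factorization $\p^5\dashrightarrow\mathcal H\dashrightarrow\p^4$ goes through. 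Your further claim that the general fiber is \emph{exactly} one conic is also not complete as written: dominance and $\dim\mathcal H=4$ only force the induced map $\mathcal H\dashrightarrow\p^4$ to be generically finite, not of degree one, so a priori the fiber could be a union of several conics of the congruence. Ruling this out still requires a degree computation (e.g.\ the projective degrees of $\varphi$), which is precisely what the paper's direct computational verification of the general fiber provides.
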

\begin{proof}
A general $S_{14}\in\mathcal S_{14}$  is $k$-normal for every $k\geq 2$, see \cite[Example 3.8]{AR}, yielding $h^0(\mathcal I_{S_{14}}(5))=141$. For a particular smooth $S\in\mathcal S_{14}$ we verified that  $h^0(N^*_{S/\p^5}(5))=136$ so that for a general $S_{14}\in\mathcal S_{14}$  we have $h^0(N^*_{S_{14}/\p^5}(5))\leq 136$. By \eqref{conorm} we have   $h^0(\mathcal I^2_{S_{14}}(5))\geq 5$ for a general $S_{14}\in\mathcal S_{14}$. Since in the example we studied   $h^0(\mathcal I^2_{S}(5))=5$, the same holds for a general $S_{14}\in\mathcal S_{14}$, see Subsection \ref{construction}.

Let $\varphi: \p^5\map\p^4$ be the rational  map associated to $|H^0(\mathcal I^2_{S_{14}}(5))|=\p^4$ with $S_{14}$ general. 
We verified that the closure of a general fiber of $\varphi$ is a 5-secant conic to $S_{14}$,
concluding the proof.
\end{proof}

Let $S_{26}\subset\p^5$ be a  rational septimic
scroll with three nodes recently considered by Farkas and Verra in \cite{FV}, where they also proved that a general $X\in\mathcal C_{26}$ contains a surface of this kind. Also these surfaces admit a congruence of
5-secant conics transversal to $X$ and parametrized by a rational variety, see \cite[Remark~6]{RS1}.

\begin{thm}\label{famconics26} For a general  surface $S_{26}\subset\p^5$ as above we have $|H^0(\mathcal I_{S_{26}}^2(5))|=\p^4$ and this linear system determines a rational map $\varphi:\p^5\map \p^4$ whose general fiber is a  5-secant conic to $S_{26}$. In particular,  the restriction of $\varphi$ to a general cubic through $S_{26}$ is birational.
\end{thm}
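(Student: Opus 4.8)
The plan is to follow verbatim the strategy used in the proof of Theorem~\ref{famconics14}, adapting each numerical input to the surface $S_{26}$. First I would establish the dimension count for the relevant linear systems. One needs two numbers for a general $S_{26}\in\mathcal S_{26}$: the value $x=h^0(\mathcal I_{S_{26}}(5))$ and the value $y=h^0(N^*_{S_{26}/\p^5}(5))$. The first should follow from $k$-normality of $S_{26}$ (as a rational scroll of degree $7$ with three nodes, its hyperplane sections and projective normality are classically controlled, or one invokes the relevant result of \cite{AR}); the second, together with the inequality $h^0(\mathcal I^2_{S_{26}}(5))\ge x-y$ coming from the conormal sequence \eqref{conorm}, must be pinned down by exhibiting one explicit surface $S\in\mathcal S_{26}$ with $h^0(\mathcal I^2_S(5))=x-y=5$ and then invoking the semicontinuity argument of Subsection~\ref{construction}. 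Concretely, since $\mathcal S_{26}$ is unirational, one produces via Macaulay2 a surface over a suitable finite field (or with symbolic parameters) and computes that $h^0(N^*_{S/\p^5}(5))$ and $h^0(\mathcal I^2_S(5))$ take the asserted values; by upper semicontinuity the same holds generically, giving $|H^0(\mathcal I^2_{S_{26}}(5))|=\p^4$.

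Next I would address the geometry of the map $\varphi\colon\p^5\map\p^4$ attached to this $\p^4$ of quintics. The claim to verify is that the closure of a general fiber $F$ of $\varphi$ is a $5$-secant conic to $S_{26}$. This is again a computation on the generic surface: one takes the generic point $p\in\p^5$ (introducing further parameters), forms the scheme-theoretic fiber $\overline{\varphi^{-1}(\varphi(p))}$, and checks that it is an irreducible conic meeting $S_{26}$ in a length-$5$ scheme. Equivalently, on $\Bl_{S_{26}}\p^5$ one checks $E\cdot F'=5$ and $H\cdot F'=2$; the transversality and the relation $(3H-E)\cdot F'=1$ then come for free. This step reproves, from the present point of view, the existence of the congruence of $5$-secant conics already known from \cite[Remark~6]{RS1}, and identifies $\mathcal H$ with $\p^4$.

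Finally, the conclusion about cubic fourfolds is immediate once the two steps above are in place. Since $\varphi\colon\p^5\map\p^4$ is dominant with general fiber a $5$-secant conic to $S_{26}$, and $S_{26}$ admits (by \cite[Remark~6]{RS1}) a congruence of $5$-secant conics transversal to a general $X\in\mathcal C_{26}$ containing it, the last sentence of Theorem~\ref{famconics26} follows by applying Theorem~\ref{criterion}: the restriction $\varphi|_X\colon X\map\p^4$ is birational, because the unique conic of the congruence through a general point of $X$ is not contained in $X$ and is contracted by $\varphi$, so the fiber of $\varphi|_X$ through a general point of $X$ is a single point.

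The main obstacle is the second step: the dimension count in the first step is a routine semicontinuity-plus-computation argument of exactly the shape already carried out for $S_{14}$, but verifying that the generic fiber of $\varphi$ is precisely a $5$-secant conic — as opposed to, say, a reducible or higher-secant degeneration — is the delicate point, since it requires either a genuinely heavy symbolic computation over the function field of $\mathcal S_{26}$ or a careful generic specialization together with the justification (as explained in Subsection~\ref{construction}) that generic specialization commutes with the fiber computation. In practice one runs the verification over a finite field for a randomly chosen $S_{26}$ and a randomly chosen point of $\p^5$, and then upgrades to the generic statement by semicontinuity of the relevant incidence conditions.
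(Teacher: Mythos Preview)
Your proposal is correct and follows essentially the same approach as the paper's own proof: compute $h^0(\mathcal I_{S_{26}}(5))$ and $h^0(N^*_{S_{26}/\p^5}(5))$ (the paper gives the explicit values $144$ and $139$), use the conormal sequence \eqref{conorm} together with an explicit example where $h^0(\mathcal I^2_S(5))=5$ to conclude $|H^0(\mathcal I^2_{S_{26}}(5))|=\p^4$ for general $S_{26}$, and then verify computationally that a general fiber of $\varphi$ is a $5$-secant conic. The paper is terser than your write-up and does not invoke \cite{AR} for $k$-normality here (it simply asserts the value $144$), but the architecture is identical.
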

\begin{proof} For a general $S_{26}\in\mathcal S_{26}$ as above,  we have $h^0(\mathcal I_{S_{26}}(5))=144$, $h^0(N^*_{S_{26}/\p^5}(5))=139$ and in an explicit example of $S\in\mathcal S_{26}$ we verified  that $h^0(\mathcal I_S^2(5))=5$. Thus  $|H^0(\mathcal I_{S_{26}}^2(5))|=\p^4$ for a general $S_{26}$, see Subsection \ref{construction}. Let $\varphi: \p^5\map\p^4$ be the rational  map associated to $|H^0(\mathcal I^2_{S_{26}}(5))|=\p^4$ with $S_{26}$ general. 
We verified that  a general  fiber of the corresponding $\varphi$ is a 5-secant conic to $S_{26}$, concluding the proof.
\end{proof}

Let $S_{38}\subset\p^5$ be a general  degree 10 smooth surface of sectional genus 6 obtained as the image
of $\p^2$ by the linear system of plane curves of degree 10 having 10 fixed triple points. As shown by Nuer in \cite{Nuer},  these surfaces are contained in a general $[X]\in\mathcal C_{38}$.   In \cite[Theorem 4]{RS1}
we proved that a general $S_{38}$ admits a congruence of 5-secant conics transversal to $X$ and parametrised by a rational variety.

\begin{thm}\label{famconics38} For a  surface $S_{38}\subset\p^5$ as above we have $|H^0(\mathcal I_{S_{38}}^2(5))|=\p^4$ and this linear system defines a rational map $\varphi:\p^5\map \p^4$ whose general fiber is a  5-secant conic to $S_{38}$. In particular,  the restriction of $\varphi$ to a general cubic through $S_{38}$ is birational.
\end{thm}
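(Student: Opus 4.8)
The plan is to follow verbatim the strategy used in the proofs of Theorems~\ref{famconics14} and \ref{famconics26}, since the surface $S_{38}$ plays here the same role as $S_{14}$ and $S_{26}$ there. The first step is to pin down $h^0(\mathcal I_{S_{38}}(5))$ for a general $S_{38}\in\mathcal S_{38}$. Since $S_{38}$ is the image of $\PP^2$ under the linear system of plane curves of degree $10$ with $10$ general triple points, one computes the dimension of $H^0(\mathcal I_{S_{38}/\PP^5}(5))$ either by a projective-normality argument (checking $k$-normality for $k=5$, so that $h^0(\mathcal I_{S_{38}}(5))$ equals $\binom{10}{5}-h^0(\mathcal O_{S_{38}}(5))$, the latter being $h^0$ of the pullback line bundle $\mathcal O_{\PP^2}(50)$ twisted down by the $10$ triple points, namely $\binom{52}{2}-10\binom{3}{2}$ provided this imposes independent conditions) or directly in Macaulay2 on an explicit member. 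The second step is to bound $h^0(N^*_{S_{38}/\PP^5}(5))$ from above: construct one explicit smooth $S\in\mathcal S_{38}$ over a finite field (or over $\QQ$) and compute $h^0(N^*_{S/\PP^5}(5))$ there; by upper semicontinuity over the open locus $V\subseteq\mathcal S_{38}$ of smooth surfaces, the general value is $\le$ this number. Feeding these into the conormal sequence \eqref{conorm} gives $h^0(\mathcal I^2_{S_{38}}(5))\ge x-y$ for the general surface.

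The third step is the matching lower bound in the form of an explicit computation: exhibit a surface $S\in\mathcal S_{38}$ with $h^0(\mathcal I^2_S(5))=x-y$; then the upper semicontinuity of $h^0(\mathcal I^2_{S_d}(5))$ on $\mathcal S_d$, together with the inequality from the previous paragraph, forces $h^0(\mathcal I^2_{S_{38}}(5))=x-y$ for the general $S_{38}$, and the expected value is $5$, i.e.\ $|H^0(\mathcal I_{S_{38}}^2(5))|=\PP^4$. All of this is exactly the template described in Subsection~\ref{construction}, so the write-up should just quote the three numbers $h^0(\mathcal I_{S_{38}}(5))$, $h^0(N^*_{S_{38}/\PP^5}(5))$, and the verified $h^0(\mathcal I^2_S(5))=5$, and invoke that subsection.

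The fourth and final step is to identify the general fiber of $\varphi=\Phi_{|H^0(\mathcal I^2_{S_{38}}(5))|}\colon\PP^5\map\PP^4$ with a $5$-secant conic to $S_{38}$. This is done computationally: take the generic point of $\PP^5$ (after adjoining free parameters for the unirational family $\mathcal S_{38}$), compute the scheme-theoretic closure of $\varphi^{-1}(\varphi(p))$, and check it is an irreducible conic meeting $S_{38}$ in a length-$5$ subscheme; the generic specialization commutes with this computation as explained in the Macaulay2 subsection. Once the general fiber is a $5$-secant conic, we are in the situation of a congruence of $5$-secant conics (so $e=2$, $r=3$), $\Phi$ is birational onto its image $\PP^4$ by construction, and Theorem~\ref{criterion} yields that the restriction of $\varphi$ to a general cubic fourfold through $S_{38}$ is birational onto $\PP^4$ --- equivalently one checks directly that $(3H-E)\cdot F'=1$ as in the general discussion of Section~\ref{nec}. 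The main obstacle is the second step: the conormal twist computation $h^0(N^*_{S/\PP^5}(5))$ for $S_{38}$ is the heaviest piece, and controlling it (and the fiber computation) requires a well-chosen explicit surface so that Macaulay2 terminates; everything else is a routine instance of the semicontinuity machinery already set up.
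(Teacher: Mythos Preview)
Your proposal is correct and follows exactly the paper's argument: the paper establishes $h^0(\mathcal I_{S_{38}}(5))=126$ via $5$-normality (quoting Bertram--Ein--Lazarsfeld, using that the ideal is generated by $10$ cubics), computes $h^0(N^*_{S/\PP^5}(5))=121$ on an explicit smooth example to get $h^0(\mathcal I^2_{S_{38}}(5))\geq 5$ from \eqref{conorm}, verifies $h^0(\mathcal I^2_S(5))=5$ on that same example, and then checks that the general fiber of $\varphi$ is a $5$-secant conic. One small slip in your parenthetical: the pullback of $5H$ to $\Bl_{10\text{ pts}}\PP^2$ is $50L-15\sum E_i$ (multiplicity $5\cdot 3=15$, not $3$), so the relevant count is $\binom{52}{2}-10\binom{16}{2}=126$, not $\binom{52}{2}-10\binom{3}{2}$.
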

\begin{proof}
A  general $S_{38}\in\mathcal S_{38}$ has ideal generated by 10 cubic forms and is thus 5-normal by \cite[Proposition 1]{bertram-ein-lazarsfeld}, yielding $h^0(\mathcal I_{S_{38}}(5))=126$ for a general $S_{38}$ (a fact which can also be verified by a direct computation). For a particular smooth $S\in\mathcal S_{38}$ we verified that  $h^0(N^*_{S/\p^5}(5))=121$ so that for a general $S_{38}\in\mathcal S_{38}$  we have $h^0(N^*_{S_{38}/\p^5}(5))\leq 121$. From \eqref{conorm}   we deduce $h^0(\mathcal I^2_{S_{38}}(5))\geq 5$ for a general $S_{38}\in\mathcal S_{38}$. Since in the previous  explicit  example we also  have $h^0(\mathcal I^2_{S}(5))=5$,  the same holds  for a general $S_{38}\in\mathcal S_{38}$, see Subsection \ref{construction}. We verified that a general fiber of the corresponding rational map $\varphi$ is a 5-secant conic to $S_{38}$, concluding the proof.
\end{proof}

\section{Explicit birational maps to linear sections of $\mathbb G(1,3+k)$ for cubics in $\mathcal C_{14+12k}$ for $k = 1,2$}\label{Gr}

In this section we  analyse some examples and look at  them as suitable generalisation of those considered by Fano. A smooth quintic del Pezzo surface $D\subset\p^5$ can be realized as a divisor of type $(1,2)$ on the Segre 3-fold $Z=\p^1\times \p^2\subset\p^5$ while a smooth quartic rational normal scroll $T\subset\p^5$ can be realized (also) as a divisor of type $(2,1)$. Moreover, a general cubic through $D$ will cut $Z$ along $D$ and a smooth divisor of type $(2,1)$, that is a smooth rational normal scroll $T$ (and viceversa).

One might wonder if  something similar happens for the next admissible values $d=26$ and $d=38$  or if, at least, also in these cases there exist  surfaces $S_d$ giving explicit birational maps to four dimensional (smooth) linear sections of $\mathbb G(1,3+k)$,  $d=14+12k$. We shall see that very surprisingly this is the case although the linkage phenomenon described above appears again only for $d=38$. 
\medskip

Let $S\subset\p^6$ be a septimic surface with a node, which is the projection of a smooth del Pezzo
surface of degree seven in $\p^7$ from a general point on its secant variety. Let $S'_{26}\subset\p^5$ be the projection of $S$ from a general point outside the secant variety $\Sec(S)\subset\p^6$. These surfaces admit a congruence of
5-secant conics parametrised by a rational variety and a general cubic in $\mathcal C_{26}$ contains such a surface, see \cite[Theorem 4]{RS1}.

\begin{thm}\label{f26} For a general  surface $S'_{26}\subset\p^5$ as above we have $|H^0(\mathcal I_{S'_{26}}^2(5))|=\p^7$ and this linear system determines a rational map $\varphi:\p^5\map Y^4\subset \p^7$ with $Y^4$ a smooth linear section of $\mathbb G(1,4)\subset\p^9$. A  general fiber of $\varphi$ is a  5-secant conic to $S'_{26}$ and  the restriction of $\varphi$ to a general cubic through $S'_{26}$ is birational. 
\end{thm}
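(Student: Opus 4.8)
The plan is to follow the same pattern used in the proofs of Theorems~\ref{famconics14}, \ref{famconics26} and \ref{famconics38}, but now tracking the projective dimension $7$ rather than $4$ and, in addition, identifying the image $Y^4$ with a linear section of $\mathbb G(1,4)$. First I would pin down $h^0(\mathcal I_{S'_{26}}(5))$ for a general $S'_{26}\in\mathcal S_{26}$: since $S'_{26}$ is the double projection to $\p^5$ of a degree-$7$ del Pezzo surface in $\p^7$, one computes the Hilbert function of the del Pezzo and controls how $5$-normality is affected by the two projections (one from a point on $\Sec(S)$, one from a general point), or one simply verifies the value directly. Next, using the conormal exact sequence~\eqref{conorm}, I would bound $h^0(\mathcal I_{S'_{26}}^2(5))\ge h^0(\mathcal I_{S'_{26}}(5))-h^0(N^*_{S'_{26}/\p^5}(5))$, compute $h^0(N^*_{S'_{26}/\p^5}(5))$ in one explicit smooth example, and check in that same example that $h^0(\mathcal I^2_{S'_{26}}(5))=8$; the semicontinuity argument of Subsection~\ref{construction} then gives $|H^0(\mathcal I_{S'_{26}}^2(5))|=\p^7$ for the general member.

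The second step is to analyse the rational map $\varphi\colon\p^5\map\p^7$ defined by this linear system. I would compute the general fiber of $\varphi$ in an explicit parametrized example (exploiting the unirationality of $\mathcal S_{26}$, as described in Subsection~\ref{construction}): the expectation, forced by the existence of the congruence of $5$-secant conics from \cite[Theorem 4]{RS1} together with the fact that quintics with double points along $S'_{26}$ contract the curves of such a congruence, is that the closed general fiber is exactly a $5$-secant conic to $S'_{26}$. Having this, $\varphi$ is generically finite onto its $4$-dimensional image $Y^4\subset\p^7$, and the transversality of the congruence to a general cubic through $S'_{26}$ — already recorded in \cite{RS1} via the relation $(3H-E)\cdot F'=1$ — yields that the restriction $\varphi|_X$ is birational for a general cubic $X$ through $S'_{26}$, exactly as in Theorem~\ref{criterion}.

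The genuinely new point, and the one I expect to be the main obstacle, is the identification of $Y^4$ as a smooth four-dimensional linear section of $\mathbb G(1,4)\subset\p^9$. The first three theorems only needed $Y=\p^4$, which is automatic once the fiber is a conic and the image is four-dimensional; here one must recognize additional quadratic equations cut out by the image. I would compute the homogeneous ideal of $Y^4\subset\p^7$ (again in the explicit example, then conclude by semicontinuity), expecting to find it generated by quadrics of the right number and rank to exhibit $Y^4$ as the intersection of $\p^7$ with the Plücker embedding of $\mathbb G(1,4)$; concretely one looks for a $5\times5$ skew-symmetric matrix of linear forms on $\p^9$ whose $4\times4$ Pfaffians cut out $\mathbb G(1,4)$, and checks that restricting to the appropriate $\p^7$ recovers $Y^4$, with smoothness verified by the Jacobian criterion in the example. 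Finally I would check $\deg Y^4=5$ and the multidegree $3,15,31,25,5$ of $\varphi$ against Table~\ref{TabCongruenze2} as a consistency test. All of this is routine to state but computationally delicate; the conceptual content is entirely in Theorem~\ref{criterion} plus the congruence from \cite{RS1}, and the Grassmannian identification is the part requiring genuine (Macaulay2-assisted) verification.
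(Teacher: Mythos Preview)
Your proposal is correct and follows essentially the same route as the paper, with one caveat and one simplification worth noting. The paper's own proof is in fact terser than yours: it does \emph{not} go through the conormal sequence~\eqref{conorm} for this case but simply records the direct computational verification that $h^0(\mathcal I_{S'_{26}}^2(5))=8$, that the image of $\varphi$ is a smooth four-dimensional linear section of $\mathbb G(1,4)$, and that a general fiber is a $5$-secant conic. Your more structured argument (lower bound via the conormal sequence, then match it in an example) is perfectly valid and mirrors the earlier theorems, but the authors evidently judged it unnecessary here. One small slip: you write ``in one explicit smooth example'', but the general $S'_{26}$ is \emph{not} smooth---by construction it carries a single node (it is the projection of a nodal degree-$7$ surface in $\p^6$), so the conormal/semicontinuity step must be run on a nodal member, as the paper already does for the three-nodal scroll $S_{26}$ in Theorem~\ref{famconics26}.
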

\begin{proof} For a general $S'_{26}\in\mathcal S'_{26}$ as above
we verified  that $h^0(\mathcal I_{S'_{26}}^2(5))=8$ and that the closure of the image of  $\varphi:\p^5\map\p^7$ is  a smooth four dimensional linear section
$Y^4$ of $\mathbb G(1,4)$. Moreover,  a general  fiber of the corresponding map $\varphi$ is  a 5-secant conic to $S'_{26}$, concluding the proof.
\end{proof}

 The two surfaces $S_{26}$ and $S'_{26}$  are not linked in a variety of dimension three via cubics because the sum of their degrees is 14, which  is not divisible by 3.
\medskip
  
Studying  the rational map $\varphi:\p^5\map\p^4$ treated in Theorem \ref{famconics38} we realized that its base locus contains an irreducible component of dimension three  $B\subset\p^5$ of degree 6 and sectional genus 3. This variety 
 has 7 singular points and it has homogeneous ideal generated by four cubics. So $B\subset\p^5$ is a degeneration of the so called {\it Bordiga scroll},
 which is  a threefold given by the maximal minors of a general $3\times 4$ matrix of linear forms on $\p^5$. The variety $B$   contains the surface $S_{38}\subset\p^5$ and a  general cubic through 
 $S_{38}$ cuts $B$ along $S_{38}$ and an octic rational scroll $S'_{38}\subset\p^5$  with  6 nodes belonging to  the singular locus of  $B$. The scroll $S'_{38}$ is a projection of a smooth octic rational normal scroll $S\subset\p^9$ from a special $\p^3$ cutting the secant variety to $S$ in six points.

  As far as we know this octic rational scroll with six nodes has not been constructed before and, in this context, it is the right generalization of the smooth quartic rational normal scroll considered by Fano. Moreover, it is   remarkable also because it does not come from  the diagonal construction via the associated $K3$ surfaces as for the Farkas-Verra and Lai scrolls (see \cite{FV,Lai} also for more details on this construction). Let us now describe some geometrical properties of the octic rational scroll $S'_{38}\subset\p^5$.
 
 \begin{thm}\label{f38} A general $[X]\in\mathcal C_{38}$ contains an octic rational scroll $S'_{38}\subset\p^5$ with 6 nodes. Moreover, for a general  surface $S'_{38}\subset\p^5$ we have $|H^0(\mathcal I_{S'_{38}}^3(8))|=\p^{10}$ and this linear system determines a rational map $\varphi:\p^5\map Y^4\subset \p^{10}$ with $Y^4$ a  linear section of $\mathbb G(1,5)\subset\p^{14}$. The  general fiber of $\varphi$ is an  8-secant twisted cubic to $S'_{38}$ and   the restriction of $\varphi$ to a general cubic through $S'_{38}$ is birational.   In particular, an octic rational scroll $S'_{38}\subset\p^5$ admits a congruence of 8-secant cubics.
\end{thm}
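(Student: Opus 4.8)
The plan is to follow the same computational-plus-semicontinuity strategy used in the proofs of Theorems \ref{famconics14}, \ref{famconics26}, \ref{famconics38}, \ref{f26}, and to invoke the criterion Theorem \ref{criterion} for the final birationality conclusion. First I would establish that a general $[X]\in\mathcal C_{38}$ contains an octic rational scroll $S'_{38}\subset\p^5$ with $6$ nodes. The natural route, suggested by the discussion preceding the statement, is via linkage: starting from the surface $S_{38}\subset\p^5$ of degree $10$ and sectional genus $6$ (which a general $[X]\in\mathcal C_{38}$ contains by \cite{Nuer}), one identifies the three-dimensional component $B\subset\p^5$ of degree $6$ and sectional genus $3$ in the base locus of the map $\varphi$ of Theorem \ref{famconics38}; one checks that $B$ is cut out by four cubics, so that a general cubic $X$ through $S_{38}$ meets $B$ in a surface residual to $S_{38}$ in the complete intersection of three of those cubics, and one identifies this residual surface as an octic rational scroll $S'_{38}$ with $6$ nodes lying on $\Sing(B)$. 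The fact that $X$ can be taken general in $\mathcal C_{38}$ then follows because $S_{38}$ moves in a general such $X$ and hence so does $S'_{38}$; alternatively one verifies directly, by a parameter count on the component $\mathcal S'_{38}$ of the Hilbert scheme of such scrolls (identified as projections of a smooth octic rational normal scroll $S\subset\p^9$ from a $\p^3$ meeting $\Sec(S)$ in six points) together with semicontinuity, that the locus of cubics containing an $S'_{38}$ is a divisor, hence equals the irreducible divisor $\mathcal C_{38}$.

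Next I would compute $h^0(\mathcal I_{S'_{38}}^3(8))$ for a general $S'_{38}$. Using the conormal exact sequence in the analogue of \eqref{conorm} for $e=3$, namely
\begin{equation*}
0\to \mathcal I^3_{S'_{38}}(8)\to \mathcal I^2_{S'_{38}}(8)\to \left(N^*_{S'_{38}/\p^5}\otimes \mathcal I_{S'_{38}}\right)(8)\to 0,
\end{equation*}
or more directly the filtration $\mathcal I^3\subset\mathcal I^2\subset\mathcal I\subset\O$ with graded pieces $\Sym^2 N^*$, $N^*$, computed twisted by $8$, one reduces the problem to the cohomology of the (co)normal bundle of the scroll, which is accessible via its realization as a projection from $\p^9$. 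As in the earlier theorems, I would carry out an explicit Macaulay2 \cite{macaulay2} computation on one particular $S'_{38}\in\mathcal S'_{38}$ showing $h^0(\mathcal I_{S'_{38}}^3(8))=11$, and then upgrade this to the general member of $\mathcal S'_{38}$ by the upper semicontinuity arguments recalled in Subsection \ref{construction} (one inequality from the exact sequence applied to the general surface, the reverse from the explicit example). This gives $|H^0(\mathcal I_{S'_{38}}^3(8))|=\p^{10}$.

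Then I would analyze the rational map $\varphi:\p^5\map\p^{10}$ defined by this linear system: a Macaulay2 computation on the explicit example shows that the closure of the image is a four-dimensional linear section $Y^4$ of $\mathbb G(1,5)\subset\p^{14}$, with the multidegree $3,24,80,70,14$ of Table \ref{TabCongruenze2}, and that the closure of a general fiber of $\varphi$ is an irreducible twisted cubic $F$ with $E\cdot F'=8$ (i.e.\ $F$ is $8$-secant to $S'_{38}$, length $8=3\cdot3-1$) and $(3H-E)\cdot F'=1$. The last numerical condition says precisely that a general cubic through $S'_{38}$ meets such an $F$ in a single residual point, hence $\varphi$ restricts birationally to a general cubic fourfold through $S'_{38}$; by semicontinuity this transfers from the explicit example to the general $S'_{38}$. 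Equivalently, having exhibited a dominant $\varphi:\p^5\map Y^4$ whose general fiber is an $8$-secant twisted cubic to $S'_{38}$, the discussion in Section \ref{nec} shows this \emph{is} a congruence of $8$-secant twisted cubics to $S'_{38}$ transversal to a general cubic through it, and Theorem \ref{criterion} then yields the birationality of the restriction of $\varphi$ to a general $X\in\mathcal C_{38}$. This proves in particular the final assertion that $S'_{38}$ admits a congruence of $8$-secant cubics.

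I expect the main obstacle to be twofold. First, the linkage step: one must show rigorously that the residual of $S_{38}$ in $X\cap B$ (where $X$ is a general cubic through $S_{38}$ and $B$ is the degree-$6$ base-locus component) is exactly the expected octic scroll with six nodes, that it is reduced and irreducible, that it genuinely deforms in a general $[X]\in\mathcal C_{38}$ (so the divisor claim is not vacuous), and that its nodes are forced to lie on $\Sing(B)$ — liaison theory gives the Hilbert polynomial of the residual scheme but identifying its geometry and singularities requires care. Second, the fiber computation for $e=3$ is computationally much heavier than the $e=2$ cases of Theorems \ref{famconics14}--\ref{f26}: the multidegree $3,24,80,70,14$ and the degree-$204$ base locus of the inverse map indicate that symbolic computation over the rationals of the generic fiber is likely infeasible, so one relies on a generic specialization of the unirational parameters of $\mathcal S'_{38}$ as explained in Subsection "Computations via Macaulay2", and then one must be careful that the properties verified after specialization (irreducibility of the general fiber, its being a twisted cubic, the intersection numbers with $E$ and $3H-E$, and the structure of $Y^4$ as a linear section of $\mathbb G(1,5)$) are exactly the open conditions whose validity on one fiber propagates by semicontinuity to the general member.
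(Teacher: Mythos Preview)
Your proposal is correct and follows the same computational-plus-semicontinuity strategy as the paper. The one point of divergence is how the first claim is established: the paper does \emph{not} argue via linkage (your primary route) but takes precisely your alternative route, computing in an explicit example that $h^0(N_{S'_{38}/\p^5})=47$, $h^0(\mathcal I_{S'_{38}}(3))=10$, and $h^0(N_{S'_{38}/X})=2$, so that the incidence correspondence in $\mathcal S'_{38}\times\mathcal C$ has dimension $47+9=56$ and projects to a divisor in $\mathcal C$, which is $\mathcal C_{38}$ since $(S'_{38})^2=34$ gives $d=3\cdot 34-8^2=38$. This sidesteps exactly the liaison difficulties you flag as obstacles (reducedness and irreducibility of the residual, deformation to the general $X$). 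For the rest---showing $h^0(\mathcal I^3_{S'_{38}}(8))=11$, identifying the image as a linear section of $\mathbb G(1,5)$, and checking that the general fibre is an $8$-secant twisted cubic---the paper proceeds just as you describe, except that it does not invoke the conormal filtration (your displayed sequence should have $\Sym^2 N^*$ as the quotient, as you note in the next clause) and simply records the directly computed values.
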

\begin{proof} A general octic scroll $S'_{38}\subset\p^5$ depends on 47 parameters and it has homogeneous ideal generated by 10 cubics forms.  In an explicit example $S\in\mathcal S'_{38}$ we verified that $h^0(N_{S/\p^5})=47$ and that $S$ is contained in smooth cubic hypersurfaces. Therefore $\mathcal S'_{38}$ is generically smooth of dimension 47 and the natural incidence correspondence in $\mathcal S'_{38}\times\mathcal C$ above the open subset of $\mathcal S'_{38}$ where $h^0(\mathcal I_S(3))=10$ has an irreducible component $\widetilde{\mathcal C_{38}}$ of dimension 56. Since $(S'_{38})^2=34$ to prove that a general $[X]\in\mathcal C_{38}$ contains such a surface, we verified  in an explicit general example that $h^0(N_{S'_{38}/X})=2$.

For a general $S'_{38}\in\mathcal S'_{38}$   we have $h^0(\mathcal I_{S'_{38}}^3(8))=11$ and the closure of the image of the associated rational map $\varphi: \p^5\map \p^{10}$ is  a  linear section $Y^4\subset\p^{10}$ of $\mathbb G(1,5)\subset\p^{14}$.  
We verified that  a general fiber of  $\varphi$ is an 8-secant twisted cubic to $S'_{38}$, concluding the proof.
\end{proof}

\section{Computations}

The aim of this section is to show how one can ascertain
the contents of Theorems~\ref{famconics14}, \ref{famconics26}, \ref{famconics38}, and similar results
in specific examples using the computer algebra system  {Macaulay2}~\cite{macaulay2}.

We begin to observe that 
given the defining homogeneous ideal of a subvariety
$X\subset\mathbb{P}^n$, the computation 
of a basis for the linear system 
$|H^0(\mathcal I_{X}^e(d))|$ of hypersurfaces of degree $d$
 with points of multiplicity at least $e$ along $X$ can be  perfomed using pure linear algebra.
 This 
  approach is implemented in
 the {Macaulay2} package \emph{Cremona} (see \cite{packageCremona}), 
which turns out to be effective for small values of $d$ and $e$.
In practice, 
in any {Macaulay2} session with the \emph{Cremona} package loaded, if \texttt{I}
is a variable containing the ideal of $X$,
we get a rational map defined by
a basis of $|H^0(\mathcal I_{X}^e(d))|$ 
by the command\footnote{For all the examples treated in this paper, 
the linear system of hypersurfaces of degree $d$ with
points of multiplicity at least $e$ along $X\subset\PP^n$
coincides with the homogeneous component of degree $d$
of the saturation 
with respect to the irrelevant ideal of $\PP^n$ 
of the $e$-power of the homogeneous ideal of $X$.
So one can also compute it using the code:
\texttt{gens image basis(d,saturate(I\textasciicircum{}e))}.}
\texttt{rationalMap(I,d,e)}.

Now we consider a specific example related to Theorem~\ref{famconics38}. 
In the following  code, we 
produce a pair $(f,\varphi)$ of rational maps:
$f:\PP^2\dashrightarrow\PP^5$ is a birational parameterization of a 
smooth surface $S=S_{38}\subset \PP^5$ of degree $10$ and sectional genus $6$ as in Theorem~\ref{famconics38},
and $\varphi:\PP^5\dashrightarrow\PP^9$
is a rational map defined by a basis of cubic hypersurfaces containing 
$S$ (see also Section 5 of \cite{RS1}). Here we work over the finite field $\mathbb{F}_{10000019}$ for speed reasons. 
{\footnotesize
\begin{Verbatim}[commandchars=&!$]
Macaulay2, version 1.14
with packages: &colore!airforceblue$!ConwayPolynomials$, &colore!airforceblue$!Elimination$, &colore!airforceblue$!IntegralClosure$, &colore!airforceblue$!InverseSystems$, 
               &colore!airforceblue$!LLLBases$, &colore!airforceblue$!PrimaryDecomposition$, &colore!airforceblue$!ReesAlgebra$, &colore!airforceblue$!TangentCone$, &colore!airforceblue$!Truncations$
&colore!darkorange$!i1 :$ &colore!airforceblue$!needsPackage$ "&colore!airforceblue$!Cremona$"; 
&colore!darkorange$!i2 :$ f = &colore!airforceblue$!rationalMap$(&colore!darkspringgreen$!ZZ$/10000019[&colore!airforceblue$!vars$(0..2)],{10,0,0,10});
o2 : RationalMap (rational map from PP^2 to PP^5)
&colore!darkorange$!i3 :$ S = &colore!airforceblue$!image$ f;
&colore!darkorange$!i4 :$ phi = &colore!airforceblue$!rationalMap$ S;
o4 : RationalMap (cubic rational map from PP^5 to PP^9)
\end{Verbatim}
} \noindent 
We now compute the rational map $\psi$ defined by the linear system of 
quintic hypersurfaces of $\PP^5$ which are singular along $S$. 
From the information obtained by its projective degrees 
we deduce
that $\psi$ is a dominant rational map onto $\PP^4$
with generic fibre of dimension $1$ and degree $2$ and with
base locus of dimension $3$ and degree $5^2-19=6$.
{\footnotesize
\begin{Verbatim}[commandchars=&\[\]]
&colore[darkorange][i5 :] &colore[darkorchid][time] psi = &colore[airforceblue][rationalMap](S,5,2);
     &colore[Sepia][-- used 9.07309 seconds]
o5 : RationalMap (rational map from PP^5 to PP^4)
&colore[darkorange][i6 :] &colore[airforceblue][projectiveDegrees] psi
o6 = {1, 5, 19, 13, 2, 0}
\end{Verbatim}
} \noindent 
Next we compute a special random fibre $F$ of the map $\psi$.
{\footnotesize
\begin{Verbatim}[commandchars=&\[\]]
&colore[darkorange][i7 :] p = &colore[airforceblue][point source] psi;  &colore[Sepia][-- a random point on P^5]
&colore[darkorange][i8 :] F = psi^*(psi(p));             
\end{Verbatim}
} \noindent 
It easy to verify directly that $F$ is an irreducible $5$-secant conic to $S$ passing through $p$. 
One can also see that 
$F$ 
coincides with the pull-back $\overline{\varphi^{-1}(L)}$ of the unique line $L\subset \overline{\varphi(\PP^5)}\subset\PP^9$ 
 passing through $\varphi(p)$ that is not the image 
 of a secant line to $S$ passing through $p$ (see \cite{RS1} for details on this computation).
Finally, the following lines of code tell us
that the restriction of $\psi$ to a random cubic fourfold containing $S$ is a birational map 
whose inverse map is defined by forms of degree $9$ and has base locus scheme
of dimension $2$ and degree $9^2 - 27 = 54$.
{\footnotesize
\begin{Verbatim}[commandchars=&\[\]]
&colore[darkorange][i9 :] psi' = psi|&colore[airforceblue][sum](S_*,i->&colore[airforceblue][random](ZZ/10000019)*i);
o9 : RationalMap (rational map from hypersurface in PP^5 to PP^4)
&colore[darkorange][i10 :] &colore[airforceblue][projectiveDegrees] psi'
o10 = {3, 15, 27, 9, 1}
\end{Verbatim}
} \noindent 

For the convenience of the reader, we have included in a 
Macaulay2 package
(named \emph{ExplicitRationality}
and provided as an ancillary file to our arXiv submission)
the examples listed in Table~\ref{TabCongruenze2}
of birational maps between 
cubic fourfolds and other rational fourfolds. 
The two examples with $d=38$ can be obtained as follows:
{\footnotesize
\begin{Verbatim}[commandchars=&\[\]]
&colore[darkorange][i11 :] &colore[airforceblue][needsPackage] "&colore[airforceblue][ExplicitRationality]"; 
&colore[darkorange][i12 :] &colore[darkorchid][time] &colore[airforceblue][example38]();
      &colore[Sepia][-- used 1.2428 seconds]
\end{Verbatim}
} \noindent 
The above command produces the following $6$ rational maps:
\begin{enumerate}
 \item a parameterization $f:\PP^2\dashrightarrow  \PP^5$ as that obtained above  of a
 surface $S=S_{38}\subset \PP^5$ of degree $10$ and sectional genus $6$;
 \item the rational map $\psi:\PP^5\dashrightarrow\PP^4$ defined by the quintic hypersurfaces with double points along $S$;
 \item the restriction of $\psi$ to a cubic fourfold $X$ containing $S$, which is a birational map;
 \item the linear projection of a smooth scroll surface of degree $8$ in $\PP^9$ 
       from a linear $3$-dimensional subspace intersecting the secant variety of the scroll in  $6$ points,
       so that the image is a scroll surface $T\subset\PP^5$ of degree $8$ with $6$ nodes;
       moreover, we have the relation: $T\cup S = X\cap \mathrm{top}(\mathrm{Bs}(\psi))$,
       where $\mathrm{top}(\mathrm{Bs}(\psi))$ denotes the top component of the base locus of $\psi$;
  \item the rational map $\eta:\PP^5\dashrightarrow Z\subset\PP^{10}$ 
   defined by the octic hypersurfaces with triple points along $T$, and where 
   $Z\subset\PP^{10}$ is a $4$-dimensional linear section of $\mathbb{G}(1,5)\subset\PP^{14}$;
 \item the restriction of $\eta$ to the cubic fourfold $X$, which is a birational map onto $Z$.
\end{enumerate}
Now we can quickly get information on the maps, \emph{e.g.} on the inverse of the last one:
{\footnotesize
\begin{Verbatim}[commandchars=&\[\]]
&colore[darkorange][i13 :] g = &colore[airforceblue][last] oo;
o13 : RationalMap (birational map from hypersurface in PP^5 to 4-dimensional subvariety of PP^10)
&colore[darkorange][i14 :] &colore[airforceblue][describe inverse] g 
o14 = rational map defined by forms of degree 5
      source variety: 4-dimensional variety of degree 14 in PP^10 cut out by 15
                      hypersurfaces of degree 2
      target variety: smooth cubic hypersurface in PP^5
      birationality: true 
      projective degrees: {14, 70, 80, 24, 3} 
\end{Verbatim}
} \noindent 

\bibliographystyle{amsalpha}
\bibliography{bibliography}

\end{document}